\newtheorem{theorem}{Theorem}[section]
\newtheorem{lemma}{Lemma}[section]
\newtheorem{remark}{Remark}[section]
\newtheorem{proposition}{Proposition}[section]
\def \a{\alpha }
\def \l{\lambda }
\begin{document}

\newcommand{\wta}{{\rm {wt} }  a }
\newcommand{\R}{\frak R}
\newcommand{\cV}{\mathcal V}
\newcommand{\cA}{\mathcal A}
\newcommand{\cL}{\mathcal L}
\newcommand{\J}{\mathcal H}
\newcommand{\G}{\mathcal G}
\newcommand{\wtb}{{\rm {wt} }  b }
\newcommand{\bea}{\begin{eqnarray}}
\newcommand{\eea}{\end{eqnarray}}
\newcommand{\be}{\begin {equation}}
\newcommand{\ee}{\end{equation}}
\newcommand{\g}{\frak g}
\newcommand{\hg}{\hat {\frak g} }
\newcommand{\hn}{\hat {\frak n} }
\newcommand{\h}{\frak h}
\newcommand{\U}{\mathcal U}
\newcommand{\hh}{\hat {\frak h} }
\newcommand{\n}{\frak n}
\newcommand{\Z}{\Bbb Z}
\newcommand{\N}{{\Bbb Z} _{> 0} }
\newcommand{\Zp} {\Z _ {\ge 0} }
\newcommand{\C}{\Bbb C}
\newcommand{\Q}{\Bbb Q}
\newcommand{\1}{\bf 1}
\newcommand{\la}{\langle}
\newcommand{\ra}{\rangle}
\newcommand{\NS}{\bf{ns} }

\newcommand{\hf}{\mbox{$\tfrac{1}{2}$}}
\newcommand{\thf}{\mbox{$\tfrac{3}{2}$}}

\newcommand{\W}{\mathcal{W}}
\newcommand{\non}{\nonumber}
\def \l {\lambda}
\baselineskip=14pt
\newenvironment{demo}[1]%
{\vskip-\lastskip\medskip
  \noindent
  {\em #1.}\enspace
  }%
{\qed\par\medskip
  }

\def \l {\lambda}
\def \a {\alpha}

\title[Affine vertex algebra associated to $\frak{gl}(1 \vert 1)$   at the critical level  ]{A note on the  affine vertex algebra associated to $\frak{gl}(1 \vert 1)$   at the critical level and its generalizations  }
\author{ Dra\v zen Adamovi\' c}
\address{Department of Mathematics,  Faculty of Science, University of Zagreb, Croatia}
\email{adamovic@math.hr}

\keywords{vertex algebras, affine Lie superalgebras, critical level, $\mathcal{W}$-algebras}

\subjclass{17B69; 17B67}

\begin{abstract}
In this note we present an explicit realization of the affine vertex algebra $V^{cri}(\frak{gl}(1 \vert 1)) $ inside of the tensor product $F\otimes M$ where $F$ is a fermionic verex algebra and $M$ is a commutative vertex algebra. This immediately gives an alternative description of the center of $V^{cri}(\frak{gl}(1 \vert 1) ) )$  as a subalgebra  $M _ 0$ of $M$. We reconstruct the Molev-Mukhin formula for the Hilbert-Poincare series of  the center of $V^ {cri}(\frak{gl}(1 \vert 1) )$. Moreover, we construct a family of irreducible $V^{cri}(\frak{gl}(1 \vert 1))$ --modules realized on $F$ and parameterized by $\chi^+, \chi ^- \in {\C}((z)). $ We propose a generalization of $V^ {cri}(\frak{gl}(1 \vert 1))$ as a critical level version of the super $\mathcal W_{1+\infty}$ vertex algebra.
\end{abstract}

  \maketitle

 \centerline{\it Dedicated to the memory of Sibe Marde\v si\' c}
\def \l {\lambda}
\def \a {\alpha}

 \section{Introduction}

 Let $\frak g$ be the Lie superalgebra, and $  {\hg} = \frak g \otimes {\C}[t,t^{-1}] + {\C} K $ the associated affine Lie superalgebra.  The representation theory of affine Lie superalgebras at certain level $k$ is closely related with the representation theory of the universal affine vertex algebra $V^k(\g)$ associated to $\g$.     
 
 For a given vertex algebra $V$, it is important problem to describe the structure of the center of $V$. The center is defined as the following vertex subalgebra of $V$: 
 $ {\mathfrak Z}(V) = \{ v \in V \ \vert \ [Y(v,z), Y(w,z)] = 0 \ \forall w \in V\}. $
 
 In the case of affine vertex algebras, the center is non-trivial only in the case of critical level. We denote the universal affine vertex algebra at the critical level by $V^{cri}(\g)$ and its center by ${\mathfrak Z}(\widehat {\g})$. When $\g$ is a simple Lie algebra,  the center  of $V^{cri}(\g)$, called the Feigin-Frenkel center,  is finitely-generated commutative  vertex algebra (see \cite{Fr},  \cite{FB}, \cite{FF-center}). 
 
 But in the case of Lie superalgebras, ${\mathfrak Z}(\widehat \g)$ can be infinitely--generated and it is not completely understood yet. 
 
 In the recent paper \cite{MM},  A.  Molev and E. Mukhin  determined  the structure of ${\mathfrak Z}(\g)$ in the case $\g = {\frak gl}(1 \vert 1)$, and presented conjectures on  the structure of  the center for $\g={\frak gl}(m \vert n)$. In the case $\g = {\frak gl}(1 \vert 1)$, they constructed  a family  of central, Sugawara  elements (see also \cite{MR}) and proved that these central elements generate ${\mathfrak Z}(\g)$ (see \cite[Theorem 2.1]{MM}).
 
 The aim of this note is to present an alternative construction of the center of $V^{cri}(\g)$ in the case $\g = {\frak gl}(1 \vert 1)$ by using an explicit free-field realization of $V^{cri}(\g)$. Let us describe our result.  We consider the commutative vertex algebra
 $$M = {\C}[a^{\pm} (m+1/2) \ \vert m \in {\Z}_{< 0}]$$ which is uniquely determined by the following commutative fields:
 $$ a^{\pm} (z) =\sum_{m <0} a^{\pm} (m+1/2) z^{-m-1}. $$
 We prove:
 \begin{theorem} ${\mathfrak Z}({\widehat \g})$ is isomorphic to a vertex subalgebra of $M$ generated by the following fields
 $$ a^+(z)  \  \partial ^ k a^-(z ), \quad k \ge 0. $$
 In particular, the Hilbert Poincare series of ${\mathfrak Z}({\widehat \g})$  coincides with the $q$-character of the simple vertex algebra ${\mathcal W}_{1+\infty,c}$ with $c=-1$.
 \end{theorem}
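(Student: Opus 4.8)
The plan is to build an explicit free-field realization $\Phi\colon V^{cri}(\g)\hookrightarrow F\otimes M$, where $F$ is the $bc$-fermionic vertex algebra (generated by fields $b(z),c(z)$ with the standard OPE), and then identify the center inside this realization. First I would write down the images of the standard generators of $\frak{gl}(1\vert 1)$. Recall $\frak{gl}(1\vert 1)$ has even generators $E,N$ (with $E$ central) and odd generators $\psi^+,\psi^-$, with $[\psi^+,\psi^-]=E$ and $[N,\psi^\pm]=\pm\psi^\pm$. The natural guess, dictated by conformal weights, is to send the currents $\psi^\pm(z)$, $N(z)$ built out of a free fermion pair times the commutative fields $a^\pm(z)$: roughly $\psi^+(z)\mapsto b(z)$-type expression, $\psi^-(z)\mapsto c(z)a^-(z)+\cdots$, $E(z)\mapsto a^+(z)$ or a derivative thereof, and $N(z)$ a normally-ordered quadratic fermionic term plus a correction in $M$. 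One then checks that these satisfy the $\widehat{\frak{gl}}(1\vert 1)$ OPE relations at $k$ equal to the critical value; this is a finite OPE computation using Wick's theorem. Since $V^{cri}(\g)$ is universal, the assignment extends to a vertex algebra homomorphism, and a PBW/character argument (comparing graded dimensions, or exhibiting enough vectors) shows it is injective.

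Next I would compute the image of the center. By a theorem of the paper cited as \cite{MM} (available to us as \cite[Theorem 2.1]{MM}), ${\mathfrak Z}(\widehat\g)$ is generated by an explicit family of Sugawara-type central elements $S_k$. I would compute $\Phi(S_k)$ inside $F\otimes M$. The key point I expect is that, because the fermionic part of the realization is "thin" (a single $bc$-pair), the central elements—being precisely the combinations that commute with the fermions—land in the commutative subalgebra $M$, and in fact in the subalgebra $M_0\subseteq M$ generated by the fields $a^+(z)\,\partial^k a^-(z)$, $k\ge 0$. Concretely, $\Phi(S_0)$ should be (a multiple of) the normally-ordered product $:\!a^+(z)a^-(z)\!:$ up to total derivatives, and the higher $S_k$ produce the descendants $:\!a^+(z)\partial^k a^-(z)\!:$; one checks these generate exactly the claimed subalgebra and that $\Phi$ restricted to ${\mathfrak Z}(\widehat\g)$ is still injective, giving the asserted isomorphism ${\mathfrak Z}(\widehat\g)\cong M_0$.

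Finally, for the Hilbert–Poincaré series statement: the subalgebra of $M$ generated by $a^+(z)\,\partial^k a^-(z)$ for $k\ge 0$ is, by construction, a commutative vertex algebra generated by the quadratic fields in a $\beta\gamma$-type (here purely commutative, bosonic) system, which is exactly the structure underlying the $\W_{1+\infty}$ vertex algebra at central charge $c=-1$ in its free-field (one boson) realization. So I would invoke the known free-field realization of the simple quotient $\W_{1+\infty,c}$ with $c=-1$—its $q$-character is computed in the literature on $\W_{1+\infty}$—and match the gradings: the conformal weight of $a^\pm(m+1/2)$ is $-m$, and the generating field $a^+\,\partial^k a^-$ has weight $k+1$, matching the weight-$(k+1)$ generator $W^{(k)}$ of $\W_{1+\infty}$. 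Comparing the two presentations (same generators, same weights, and—via the realization—the same relations) yields equality of $q$-characters, hence of Hilbert–Poincaré series.

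The main obstacle is the injectivity of $\Phi$ on all of $V^{cri}(\g)$ (and the precise determination that the image of the center is not merely contained in but equal to the subalgebra generated by $a^+\partial^k a^-$): the containment "$\Phi(S_k)\in M_0$" is a direct computation, but showing these are all the central elements up to the vertex-algebra operations—equivalently that no extra relations appear and that $\Phi$ does not collapse part of the center—requires either the full strength of \cite[Theorem 2.1]{MM} together with a careful character count, or an independent argument that the centralizer of the fermions in $F\otimes M$ is exactly $M_0$.
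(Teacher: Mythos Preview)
Your overall strategy---a free-field realization inside $F\otimes M$---is the same as the paper's, but the execution differs at the decisive step, and your guessed realization formulas are off. In the paper \emph{both} odd currents are products of a fermion with a commutative field (specifically $E_{1,2}=\Psi^+(-\tfrac12)a^-(-\tfrac12){\bf 1}$ and $E_{2,1}=\Psi^-(-\tfrac12)a^+(-\tfrac12){\bf 1}$), not a bare fermion; the even generators are $E_{1,1}=\alpha=:\!\Psi^+\Psi^-\!:$ and $E_{2,2}=a^+(-\tfrac12)a^-(-\tfrac12){\bf 1}-\alpha$. More importantly, the paper does \emph{not} invoke \cite[Theorem~2.1]{MM} to identify the center---the point of the paper is precisely to give an alternative derivation. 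Instead it proves two things you do not attempt:
\begin{itemize}
\item[(i)] the image of $\Phi$ is \emph{exactly} the charge-zero subspace $V=(F\otimes M)_0=\bigoplus_{\ell}F_\ell\otimes M_{-\ell}$ (surjectivity, obtained by a spectral-flow argument showing that $F\otimes M$ is generated over the image by the vectors $e^{\pm\alpha}$);
\item[(ii)] the center of this explicit vertex algebra $V$ equals $M_0$, because the Heisenberg subalgebra $M(1)\subset V$ generated by $\alpha$ acts on each $F_\ell$ irreducibly with highest weight vector $e^{\ell\alpha}$, forcing any central element into ${\bf 1}\otimes M\cap V=M_0$.
\end{itemize}
The Hilbert--Poincar\'e statement then follows by a direct residue computation of the graded dimension of $M_0$, matched with Wang's formula for $\mbox{ch}[\mathcal W_{1+\infty,-1}]$.

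Your route---compute $\Phi(S_k)$ for the Molev--Mukhin generators and show they hit exactly the $a^+\partial^k a^-$---would also prove the theorem, but it is logically a different argument: it consumes \cite{MM} rather than reproving it, which is contrary to the paper's stated aim. Your fallback suggestion, that ``the centralizer of the fermions in $F\otimes M$ is exactly $M_0$,'' is not correct as written: that centralizer is all of $M$. What one actually needs, and what the paper supplies, is that the center of the \emph{subalgebra} $(F\otimes M)_0$ is $M_0$, and for this the surjectivity step (i) identifying the image with the full charge-zero subspace is essential; without it you cannot rule out that $\mathfrak Z(V)$ is larger than $M_0$ inside the image.
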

 
 In Section \ref{Whittaker}, we explicitly construct  a large family of the irreducible $V^{cri}(\frak{gl}(1 \vert 1))$ modules parametrized by
 $ \chi^{\pm}(z) \in {\C}((z)). $ It is interesting that these modules are realized  on the Clifford vertex algebra $F$.
 
 It will be  interesting to study generalization of our description of  ${\mathfrak Z}({\widehat \g})$ for $\g =\frak{gl}(1 \vert 1)$  in various directions. Of course, the most important open question is the determination of the center in the case  $\g={\frak gl}(m \vert n)$. It seems that the general case is much more complicated.
 But in the present paper we propose one different generalization.
 In Section \ref{generalization} we introduce  a vertex algebra $V_n$ having large center, such that $V_1= V^{cri}(\frak{gl}(1 \vert 1))$. The construction of $V_n$ is a critical level version  of the super ${\mathcal W}_{1+\infty}$--algebra, constructed and analysed  by T. Creutzig and A. Linshaw in \cite{CL}.
 
 In our forthcoming papers we shall study these generalizations in more details.
  \vskip 5mm
 
 {\bf Acknowledgment.} We  would like to thank Alexander  Molev and Antun Milas  for  useful discussions.
  The author is partially supported by the Croatian Science Foundation under the project 2634
and by the Croatian Scientific Centre of Excellence QuantiXLie.
\vskip 5mm

 \section{Certain vertex algebras}
\label{certain}
  
In  the  paper  we assume that the reader is familiar with the basic theory of vertex algebras. In this section we briefly review  results on  vertex algebras which we use in the paper.
 
\subsection{ The affine vertex algebra $V^{cri} (\mathfrak{gl}(m \vert n) )$ } 
The vertex algebras associated to affine Lie superalgebras are most important examples of vertex algebras (cf. \cite{FB}, \cite{K}, \cite{LL}).

We recall the definition of universal affine vertex algebra $V^{cri} (\mathfrak{gl}(m \vert n) )$ following notations in \cite{MM}.

For $1 \le j \le m+n$ we define $\bar j \in \Z_2$:
$$\bar j = \bar 0  \quad  \mbox{for} \  1 \le j \le m, \qquad   \bar j = 1  \quad \mbox{for} \  m+1 \le j \le m+n. $$

Let $\g$ be Lie superalgebra ${\mathfrak gl}(m \vert n)$ with standard basis $E_{i, j}$, $1 \le i, j \le n+m$. The affine Lie superalgebra $\hg = \g \otimes {\C}[t, t^{-1}] + {\C} K $ with commutation relations
\bea
[E_{i,j} (r), E_{k,\ell }(s) ] =&& \delta_{k,j} E_{i,\ell}(r+s) -\delta_{i, \ell}  E_{k,j} (r+s) (-1) ^{( \bar i + \bar j)(\bar k + \bar \ell) } \nonumber \\
&& + K \left ( (n-m) \delta_{k,j} \delta_{i,\ell} (-1) ^{\bar i} + \delta_{i,j} \delta_{k,\ell} (-1) ^{\bar i + \bar k} \right) r \delta_{r+s,0} \label{com}
\eea
where $K$ is even central element, and $ x(n) = x\otimes t^r$ for $x \in \g$, $r, s \in {\Z}$.

Let ${\C} v$ be the $1$--dimensional $P= \g \otimes {\C}[t] + {\C} K$--module such that
 $$K v = v, \quad  (\g \otimes {\C}[t]  )v = 0. $$Then the $\hg$--module
$$V^{cri} (\g) = U(\hg) \otimes _{U(P)} {\C} v$$
has the vertex algebra structure which is uniquely generated by the fields
$$x (z) = \sum_{ r\in {\Z} } x(r) z^{-r-1} \quad (x \in {\g}).$$
The vertex algebra $V^{cri} (\g)$ is called the universal affine vertex algebra at the critical level.

The center of $V^{cri} (\g)$ has the following description
$$ {\mathfrak Z}(\hg) = \{ w \in V^{cri} (\g) \ \vert ( \g \otimes {\C}[t]) . w = 0\}. $$
 
\begin{remark}
The standard choice of central element in $\hg$ is $C= (n-m) K$ and it acts on $V^{cri}(\g)$ as $-h^{\vee} \mbox{Id}$. Moreover, if $m \ne n$, then the standard definition of the affine  vertex algebra $V^{-h^{\vee}} (\g)$  at the critical level  coincides with $V^{cri}(\g)$. But the case $m=n$ is different. It was observed in \cite{MM} and \cite{MR} that then one needs to take central element  $K$  acting  as identity on $V^{cri}(\g)$.
\end{remark}
\subsection{ Clifford vertex algebras }

  The Clifford
algebra $CL_n  $ is a complex associative algebra generated by
$$ \Psi_i ^{\pm}(r) , \  r  \in \hf + {\Z}, \  1\le i  \le n, $$ and relations
\bea
&& \{\Psi_i  ^{\pm}(r) , \Psi_j ^{\mp}(s) \} = \delta_{i,j} \delta_{r+s,0}; \quad
 \{\Psi_i ^{\pm}(r) , \Psi_ j ^{\pm}(s)\}=0
\nonumber
\eea
where $r, s \in \frac{1}{2} + {\Z}$.

 Let $F^{(n)} $ be the irreducible $CL $--module generated by
 the
cyclic vector ${\1}$ such that
$$ \Psi_i ^{\pm} (r) {\1} = 0 \quad
\mbox{for} \ \ r > 0 .$$

As a vector space, $F^{(n)} $ is isomorphic to the exterior algebra
$$F^{(n)} \cong \bigwedge \left(   \Psi_i ^{\pm }({-m-{\hf}}) \ \vert m \in {\Z}_{\ge 0}, \ i=1, \dots, n \right). $$

Define the following   fields on $F^{(n)} $ 
$$   \Psi_i ^{+}(z) = \sum_{ m \in   {\Z}
 } \Psi_i ^{+}(m+{\hf} )  z ^{-m- 1}, \quad  \Psi_i ^{-} (z) = \sum_{ m \in {\Z}
 } \Psi_i ^{-} (m+{\hf} )  z ^{-m-1}.$$

 The fields $\Psi_i ^{+}(z)$ and $\Psi_i^{-}(z)$, $i=1, \dots, n$ generate on $F^{(n)} $    the
unique structure of a simple vertex algebra
(cf.  \cite{FB}, \cite{K}).

It is well-known that the  vertex subalgebra of $F^{(n)}$ generated by the vectors 
$$\{ e_{i,j} = \Psi_i ^+ (-\frac{1}{2}) \Psi_j ^- (-\frac{1}{2}) {\bf 1}  \ \vert \ i,j = 1, \dots, n \} $$ is isomorphic  to the simple affine vertex algebra
$V_1 ({\frak gl}_n) $ at level $1$. This implies that the Lie algebra ${\mathfrak gl}_n$ acts on $F^{(n)} $ by derivations. The fixed point subalgebra
$$(F^{(n)} )^{{\frak gl}_n}$$ is isomorphic to the simple vertex algebra $\mathcal W_{1+ \infty}$ at central charge $c=n$
  (see \cite{FKRW}  for details on this construction).

Let us consider the case $n=1$. Set $F:= F^{(1)}$, $\Psi^{\pm} (z) = \Psi_1 ^{\pm}$.

A basis of $F $ is given by

\bea \label{bas-fer} \Psi^{+ }({-n_1-{\hf}})  \cdots \Psi^{+}({-n_r-{\hf}})  \Psi^{-}({-k_1-{\hf}})  \cdots \Psi^{-}({-k_s-{\hf}})
 {\1} \eea

where $n_i, k_i \in {\Zp}$,  $n_1 >n_2 >\cdots >n_r  $, $k_1 >k_2
>\cdots
>k_s $.

  Let $\alpha:= \Psi^{+}(-1/2) \Psi^{-}(-1/2) {\bf 1} $. Then the operator $\alpha(0)$ defines on $F$   the following $\Z$--gradation 
$$ F =\bigoplus _{\ell  \in \Z} F_{\ell}, $$
where 
$$F_ {\ell}=\{ v \in F \ \vert \ \alpha(0) v = \ell v \}. $$

A basis of $F_ {\ell}$ is given by vectors (\ref{bas-fer}) such that $\ell = r -s$.



Recall here that by using a  boson--fermion correspondence, the  fermionic vertex algebra $F$ can be realized as the lattice vertex algebra $V_{L} = M(1) \otimes {\C}[L]$ 
where 
$$ L= {\Z} \alpha, \quad \la \alpha, \alpha \ra = 1, $$
$M(1)$ is the  Heisenberg vertex algebra generated by $\alpha$ and ${\C}[L]$ is the  group algebra of $L$. In particular, as $M(1)$--modules
$  F_ {\ell} \cong M(1). e ^{\ell \alpha}$.

\subsection{Weyl vertex algebra  $W^{(n)}$ }

The Weyl vertex algebra $W^{(n)}$ is generated by the fields
$${\gamma}_i ^{\pm} (z) = \sum_{ m  \in \Z} {\gamma}_i^{\pm} (m +\tfrac{1}{2}) z ^{-m-1}, $$
whose components satisfy the commutation relation for Weyl algebra
$$[\gamma_i^+ (r), \gamma_j ^- (s) ] =\delta_{i,j} \delta_{r+s,0}, \quad  [\gamma_i ^{\pm}(r) , \gamma_j^{\pm}(s)]=0 \quad (r,s \in \tfrac{1}{2} + \Z, \ i, j = 1, \dots, n). $$

Choose the following Virasoro vector of central charge $c=-n$:
$$\omega = \frac{1}{2} \sum_{i=1}^n \left( \gamma_i ^{-} (-\tfrac{3}{2})  \gamma_i ^{+} (-\tfrac{1}{2}) -    \gamma_i ^{+} (-\tfrac{3}{2})  \gamma_i ^{-} (-\tfrac{1}{2}) \right) {\bf 1}.   $$
The associated Virasoro field is given by $$L(z) = \frac{1}{2}  \sum_{i}^n  \left( :  (\partial \gamma _i ^- (z) ) \gamma_i ^+ (z):  - :(\partial \gamma _i ^+ (z) )  \gamma_i ^-  (z) :\right) = \sum_{m=-\infty} ^{\infty} L(m) z^{-m-2}. $$

The  vertex subalgebra of $W^{(n)}$  generated by the vectors 
$$\{ e_{i,j} = -\gamma_i ^+ (-\frac{1}{2}) \gamma_j ^- (-\frac{1}{2}) {\bf 1}  \ \vert \ i,j = 1, \dots, n \} $$ is isomorphic  to the simple affine vertex algebra
$V_{-1} ({\frak gl}_n) $ at level $-1$. 
This realization was found by A. Feingold and I. Frenkel in \cite{FF}, and the simplicity  of the realization  (for $n \ge 3$)  was proved by the author and O. Per\v se in \cite{AP-2014}.

This again  implies that the Lie algebra ${\frak gl}_n$ acts on $W^{(n)} $ by derivations. The fixed point subalgebra
$$(W^{(n)} )^{{\frak gl}_n}$$ is isomorphic to the simple vertex algebra $\mathcal W_{1+ \infty}$ at central charge $c=-n$
  (for details see  \cite{KR}). We denote this vertex algebra by $\mathcal W_{1+ \infty, -n}$. 
  
  The case $n=1$ was studied by W. Wang in \cite{Wa}. It was proved in \cite[Section 5]{Wa} that  the
  $q$--character  (i.e., the    Hilbert--Poincare series ) of  $\mathcal W_{1+ \infty, -1}$ with respect to $L(0)$ is given by
  
  \bea  &&  \mbox{ch}  [\mathcal W_{1+ \infty, -1} ](q) =  \mbox{tr} \ q^{L(0)} \vert _{ \mathcal W_{1+ \infty, -1}} \nonumber \\ = &&   \mbox{Res}_z  z^{-1} \frac{1}{ \prod_{k \ge 1} (1 -q^{k-1/2} z) (1-q^{k-1/2} z^{-1})}  \nonumber  \\ =&& \label{HP1}  \frac{1}{(q)_{\infty} ^ 2 }\sum_{n \in \Z } \mbox{sign}  (n)  q^ { 2 n ^2 +n } =\frac{1}{(q)_{\infty}  ^2 }\sum_{k=0} ^ {\infty} (-1) ^ k  q^ {\frac {k^2 + k}{2} }. \eea

By  using \cite[Proposition 3.3]{MM} or \cite[Example 7.1]{BM}  we see that (\ref{HP1}) can be written as

\bea \label{HP2}    \frac{1}{(q)_{\infty} ^ 2 }\sum_{n \in \Z } \mbox{sign}  (n)  q^ { 2 n ^2 +n }    =\frac{1}{(q)_{\infty}   }\sum_{k=0} ^ {\infty}   \frac{q^ {k^2 + k }}{(q)_{k} ^ 2  }. \eea

In the above formulas we use standard notations:
$$ (q)_{\infty} =\prod_{i \ge 1} (1-q^i), \quad (q)_{k} =\prod_{i=1} ^k  (1-q^i). $$

\begin{remark} 
We should mention that  the identity   (\ref{HP2})  was first proved by Ramanujan.
Let $\mathcal W (2,2p-1)$ denotes the singlet vertex algebra with central charge $c_{1,p} = 1 - 6 (p-1) ^2 / p$ (cf. \cite{AM-2007}). 
Note that  $\mathcal W_{1+ \infty, -1} $ is isomorphic to the tensor product of a  rank-one Heisenberg vertex algebra  and the singlet vertex algebra $\mathcal W(2,2p-1)$  for $p=2$. So the combinatorial identity  (\ref{HP2}) is essentially the $q$--series identity of the  vacuum character  for $\mathcal W(2,2p-1)$ for $p=2$.

An interesting generalization  of the identity (\ref{HP2}) for $\mathcal W(2,2p-1)$-- characters  for $p \ge 3$ was proved by K. Bringmann and A. Milas in \cite[Proposition 7.2]{BM}.
\end{remark}

\subsection{Commutative vertex algebra $M^{(n)}$ }

 Let $$M^{(n)} = {\C}[{a}_i ^{+}(m +\frac{1}{2} ), {a}_i^{-}(m+\frac{1}{2}) \  \vert \ m \in {\Z}_{ <0} , i = 1, \dots, n
]$$ be the commutative vertex algebra generated by the fields
$${a}_i ^{\pm} (z) = \sum_{ m \in {\Z}_{ <0} } {a}_i ^{\pm} (m+\frac{1}{2}) z ^{-m-1}. $$
The $\frac{1}{2} {\Z}_{\ge 0}$--grading on $M^{(n)}$ is given by the operator $d \in \mbox{End} (M^{(n)})$ which is uniquely determined by the following formula:
$$ [d,  {a}_i ^{\pm} (m+\frac{1}{2}) ] = -  (m+\frac{1}{2}) {a}_i ^{\pm} (m+\frac{1}{2}). $$

The Lie algebra ${\frak gl}_n$ acts on the vertex algebra $M^{(n)}$ by derivations. This action is uniquely determined by the following formula:
$$e_{i,j} . a_k ^{+}  (r) = \delta_{j,k}   a_i ^+ (r) , \quad e_{i,j} . a_k ^{-} (r)  = -\delta_{ i ,k} a_j ^-  (r),  $$
for $1 \le i, j \le n$, $r \in \frac{1}{2} + {\Z}$.

Moreover $( M^{(n)} )^{{ \frak gl}_n}$ is a  subalgebra of  $M^{(n)}  $. 
 $( M^{(n)} )^{{ \frak gl}_n}$  is $d$--invariant, and the operator $d$ defines ${\Z}_{\ge 0}$--grading in $( M^{(n)} )^{{ \frak gl}_n}$.

In the case $n=1$ we set $a ^{\pm} (z) := a_1 ^{\pm} (z)$. The vertex algebra $M = M^{(1)} $ has the following $\Z$--gradation 

$$ M =\bigoplus _{\ell  \in \Z} M_{\ell}, $$
where 
$M_{\ell}$ is a linear span of vectors

\bea \label{bas-com} a^{+ }({-n_1-{\hf}})  \cdots a^{+}({-n_r-{\hf}})  a^{-}({-k_1-{\hf}})  \cdots a^{-}({-k_s-{\hf}})
 {\1} \eea
such that  $n_i, k_i \in {\Zp}$,  $n_1 >n_2 >\cdots >n_r  $, $k_1 >k_2
>\cdots
>k_s $ and $\ell = r-s$.

It is easy to see that $M_0$ is a vertex subalgebra of $M$ and  that $M _0$  is  strongly generated by vectors
\bea   &&  a^+ (-1/2) a^ -(-m-1/2) {\bf 1} \qquad (m \in \Zp). \label{generators} \eea

 \begin{remark}
By using construction  from  \cite{L-JEMS} one can show  a  more general  result that    $(M^{(n)} )^{{ \frak gl}_n}$    is strongly generated by vectors
$$ \sum_{i=1} ^n  a_i ^+ ( -1/2) a_i ^ -(-m-1/2) {\bf 1} \qquad (m \in \Zp). $$
\end{remark}

$M_0$ is a graded commutative vertex algebra with the following $\Z_{\ge 0}$--gradation:
$$M_0 = \bigoplus_{m = 0} ^{\infty} M_0(m) \quad M_0(m) = \{ v \in M_0(m) \ d v = m v\}. $$
 The Hilbert--Poincare series of $M_0$ is
\bea
&& \sum_{m=0} ^{\infty} \dim M_0 (m) q^m \nonumber \\
= && \mbox{Res}_z  z^{-1} \frac{1}{ \prod_{k \ge 1} (1 -q^{k-1/2} z) (1-q^{k-1/2} z^{-1})}  \nonumber \\
= &&  \frac{1}{(q)_{\infty}   }\sum_{k=0} ^ {\infty}   \frac{q^ {k^2 + k }}{(q)_{k} ^ 2  }. \nonumber 
\eea

Therefore $M_0$ is a graded commutative vertex algebra whose Hilbert--Poincare series is given by (\ref{HP1}).

 Let $\chi^{\pm}(z) = \sum_{n \in {\Z} }
\chi^{\pm}_{n} z^{-n-1} \in {\C} ((z))$. Let $M(\chi^{+},
\chi^{-})$ denotes the $1$--dimensional irreducible $M$--module
with the property that every element ${a}^{\pm}(n)$ acts on
$M(\chi^{+}, \chi^{-})$ as multiplication by $\chi^{\pm}_n \in
{\C}$.
       
   \section{The  main  result }
   
   \label{new-real}
   In this section we shall present an explicit realization of the vertex algebra $V^{cri}(\mathfrak{gl} (1 \vert 1))$.  Our construction  is similar to the realization of affine   $\mathfrak{gl} (1 \vert 1)$ from \cite{K}. Main difference is that we replace Weyl vertex algebra   (also called the symplectic boson vertex algebra) by the commutative vertex algebra $M$.

   We  consider the following subalgebra of $F  \otimes M$:
   
   \bea \label{defin-voa} V= (F  \otimes M) _0 = \bigoplus_{\ell \in \Z}  F_ {\ell} \otimes M_{-\ell} .\eea
   
   Let us denote by ${\mathfrak Z}(V)$ the center of the vertex algebra $V$.
   \begin{lemma} \label{lema-center}
 ${\mathfrak Z}(V)$ is  isomorphic to the vertex algebra $M_0$.
   \end{lemma}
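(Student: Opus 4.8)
The plan is to identify explicitly which vectors of $V = (F\otimes M)_0$ are central, and to check this is exactly $M_0 = \mathbf{1}\otimes M_0 \subset F\otimes M$. First I would recall that in $F\otimes M$ the commutative factor $M$ is central for itself, so $\mathbf 1\otimes M$ commutes with $\mathbf 1 \otimes M$; the only thing that can fail to commute is a pairing between the fermionic generators $\Psi^\pm(z)$ of $F$ and some element. Concretely, the fields on $V$ are built from $\Psi^\pm(z)$, $a^\pm(z)$, and since $a^\pm(z)$ are mutually (super)commuting with everything, the entire obstruction to centrality comes from the fermionic OPE $\Psi^+(z)\Psi^-(w)\sim (z-w)^{-1}$. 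So the first key step is: an element $v\in V$ is central if and only if $Y(v,z)$ has no singular OPE with both $\Psi^+(w)$-type and $\Psi^-(w)$-type contributions coming from $V$; I would make this precise by writing $V$'s generating set and computing $\lambda$-brackets (equivalently, the $\Psi^\pm_n$-action).

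Next I would show $M_0 \subseteq {\mathfrak Z}(V)$. Since $M_0$ is, by the discussion before Lemma \ref{lema-center}, a commutative vertex algebra strongly generated by $a^+(-1/2)a^-(-m-1/2)\mathbf 1$, and since all the $a^\pm(r)$ commute with each other and (trivially, being in the second tensor factor with no pairing) with all $\Psi^\pm(s)$, every vector of $\mathbf 1\otimes M_0$ has identically zero bracket with every generating field of $V$. Hence $\mathbf 1\otimes M_0\subseteq {\mathfrak Z}(V)$.

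For the reverse inclusion ${\mathfrak Z}(V)\subseteq M_0$ — which I expect to be the main obstacle — I would argue as follows. Take $v\in {\mathfrak Z}(V)$ and write it in the decomposition $v=\sum_\ell v_\ell$ with $v_\ell\in F_\ell\otimes M_{-\ell}$; centrality is graded, so each $v_\ell$ is central. The claim to prove is $v_\ell = 0$ for $\ell\neq 0$ and $v_0\in\mathbf 1\otimes M_0$. The strategy is to use that the Heisenberg field $\alpha(z)$ (coming from the boson-fermion correspondence $F\cong V_L$, $L=\Z\alpha$, $\langle\alpha,\alpha\rangle=1$) together with $\Psi^+(z),\Psi^-(z)$ acts on $V$, and to compute the commutator $[\Psi^+(z)\otimes\text{(something in }M), v]$ and $[\Psi^-(z)\otimes\text{(something)}, v]$. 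The point is that if $v$ has any nontrivial dependence on the fermionic creation operators $\Psi^\pm(-k-1/2)$, then pairing against the corresponding annihilation mode produces a nonzero lower-degree element, contradicting centrality; one needs to be careful that the "something in $M$" needed to land back inside $V=(F\otimes M)_0$ — i.e. multiplying $\Psi^+$ by an appropriate $a^-$ factor and $\Psi^-$ by an $a^+$ factor — does not itself kill the pairing, and here the freeness (polynomial algebra structure, no relations) of $M$ is exactly what is used: a factor like $a^+(-1/2)$ or $a^-(-1/2)$ never vanishes and never annihilates a nonzero vector under the commutative multiplication. So the technical heart is an induction on the fermionic degree (the number of $\Psi$-factors), peeling them off one at a time by bracketing against $\Psi^\mp(\text{mode})\otimes a^{\pm}(-1/2)\cdots$, to conclude the fermionic part of $v$ must be the vacuum $\mathbf 1$, forcing $\ell=0$ and $v\in\mathbf 1\otimes M$; finally, a central element of $V$ of the form $\mathbf 1\otimes p$ must in particular be central in $\mathbf 1\otimes M\cong M$, hence lies in $M_0$ by definition of $M_0$ as a vertex subalgebra — but one should double check centrality in $V$ forces centrality in the sub-vertex-algebra $M$, which is immediate since $M\hookrightarrow V$. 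Assembling these gives ${\mathfrak Z}(V)=\mathbf 1\otimes M_0\cong M_0$.
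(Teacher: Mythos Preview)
Your inclusion $M_0\subseteq\mathfrak Z(V)$ is fine and matches the paper. For the reverse inclusion, however, there are two problems.

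First, the final step is wrong as stated. You write that once $v=\mathbf 1\otimes p$, it ``must in particular be central in $\mathbf 1\otimes M\cong M$, hence lies in $M_0$ \dots\ immediate since $M\hookrightarrow V$.'' But $M$ is \emph{not} a subalgebra of $V$: only the charge-zero piece $\mathbf 1\otimes M_0$ sits inside $V=(F\otimes M)_0$. Moreover $M$ is a \emph{commutative} vertex algebra, so its center is all of $M$, not $M_0$; ``central in $M$'' gives no information. The conclusion $p\in M_0$ is nevertheless correct, but for a trivial reason you don't mention: $\mathbf 1\in F_0$, so $\mathbf 1\otimes p\in V=\bigoplus_\ell F_\ell\otimes M_{-\ell}$ forces $p\in M_0$ by the grading alone.

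Second, your ``fermionic peeling'' via modes of $E_{1,2}$ and $E_{2,1}$ is more delicate than you suggest. For $n\ge 0$ one has $E_{2,1}(n)=\sum_{k\ge n}\Psi^-(k+\tfrac12)\otimes a^+(n-k-\tfrac12)$, so applying it to $v=\sum_j f_j\otimes m_j$ yields a double sum in which the monomials $a^+(n-k-\tfrac12)m_j$ can coincide for different pairs $(j,k)$; the freeness of $M$ does not by itself separate the terms, and the inductive step needs a more careful filtration argument than ``peel off one $\Psi$ at a time.''

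The paper bypasses all of this with a one-line argument you already have the ingredients for. You note that $\alpha\in V$, so the Heisenberg vertex algebra $M(1)$ is a subalgebra of $V$. If $v\in\mathfrak Z(V)$ then $\alpha(n)v=0$ for every $n\ge 0$. Since $\alpha(n)$ acts only on the $F$-factor and each $F_\ell$ is an \emph{irreducible} $M(1)$-module with highest weight vector $e^{\ell\alpha}$, the conditions $\alpha(n)v=0$ for $n\ge 1$ force the $F$-components of $v$ to lie in $\bigoplus_\ell \C e^{\ell\alpha}$, and $\alpha(0)v=0$ kills every $\ell\ne 0$. Hence $v\in\mathbf 1\otimes M_0$. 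No peeling, no induction, no use of $E_{1,2}$ or $E_{2,1}$ is needed.
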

   \begin{proof}
   First we notice that $M_0 \subset {\mathfrak Z}(V)$. Since  $\alpha \in  V$, we have that   the Heisenberg vertex algebra  $M(1) $ is a subalgebra of $V$. By using the fact that   each  $F_{\ell}$ is irreducible $M(1)$--module with highest weight vector $e^{\ell \alpha}$ we see that  ${\mathfrak Z}(V)$   is contained in  $M_0$. The proof follows.
   \end{proof}
   We have the following result:
   
   \begin{theorem}
   \item[(1)] The vertex algebra $V$ is strongly generated by
   $$ \{ E_{i,j} \ \vert \ i,j=1,2 \}$$
   where
   \bea
   E_{1,1}&=& \alpha \label{def11} \\
   E_{1,2}&=& \Psi^+ (-\tfrac{1}{2} ) a^- (-\tfrac{1}{2} ) {\bf 1}  \label{def12} \\
   E_{2,1}&=& \Psi^- (-\tfrac{1}{2} ) a^+ (-\tfrac{1}{2} ) {\bf 1}  \label{def21} \\
 E_{2,2}&=&a^+ (-\tfrac{1}{2} ) a^- (-\tfrac{1}{2} ) {\bf 1} -  \alpha \label{def22}
   \eea
   
      \item[(2)] The vertex algebra $V$ is isomorphic to $V^{cri}(\frak{gl} (1|1))$.
      
   \item[(3)] 
   The center of $V$ is isomorphic to the commutative vertex algebra $M_0$. Its Hilbert-Poincare series is given by (\ref{HP2}).

   \end{theorem}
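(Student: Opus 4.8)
The plan is to produce a vertex algebra homomorphism $\Phi\colon V^{cri}(\mathfrak{gl}(1\vert1))\to F\otimes M$, then show it is surjective onto $V$ (this is (1)) and injective (this gives (2)); part (3) will then follow almost for free. To construct $\Phi$ I would verify, by Wick's theorem, that the four fields $E_{i,j}(z)=Y(E_{i,j},z)$ on $F\otimes M$ satisfy the relations $(\ref{com})$ with $m=n=1$ and $K$ acting as the identity (the critical level). The computation is short: the $a^{\pm}(z)$ involve only non-negative powers of $z$, commute with $F$ and with one another, so the only non-trivial contraction is $\Psi^{+}(z)\Psi^{-}(w)\sim(z-w)^{-1}$. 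One gets that $E_{1,1}(z)=\alpha(z)$ is a Heisenberg field, $E_{1,1}(z)E_{1,1}(w)\sim(z-w)^{-2}$, $E_{1,1}(z)E_{2,2}(w)\sim-(z-w)^{-2}$, $E_{2,2}(z)E_{2,2}(w)\sim(z-w)^{-2}$, $E_{1,1}(z)E_{1,2}(w)\sim E_{1,2}(w)(z-w)^{-1}$, $\{E_{1,2}(z),E_{1,2}(w)\}\sim0\sim\{E_{2,1}(z),E_{2,1}(w)\}$, and the key relation
$$E_{1,2}(z)E_{2,1}(w)\sim\frac{\,:\!a^{+}(w)a^{-}(w)\!:\,}{z-w}=\frac{(E_{1,1}+E_{2,2})(w)}{z-w}$$
with no second-order pole, consistent with $E_{1,1}+E_{2,2}$ being central; the remaining brackets with $E_{2,2}$ are analogous. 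Since $E_{i,j}(r)\1=0$ for $r\ge0$, the vector $\1\in F\otimes M$ is a vacuum vector for the resulting $\widehat{\mathfrak{gl}}(1\vert1)$-action, so the universal property of $V^{cri}(\mathfrak{gl}(1\vert1))$ yields $\Phi$; its image $V'$ lies in $V=\bigoplus_\ell F_\ell\otimes M_{-\ell}$ because every $E_{i,j}$ has combined $(\Psi^{\pm},a^{\pm})$-charge zero.

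Next, for (1), I would show $V'=V$. Since $\alpha=E_{1,1}\in V'$, the Heisenberg subalgebra $M(1)$ lies in $V'$ and acts on it; as each $F_\ell\cong M(1).e^{\ell\alpha}$ is irreducible over $M(1)$, the space $F_\ell\otimes M_{-\ell}$ is a direct sum of copies of the standard module $M(1)$, one for each monomial in $M_{-\ell}$, so it suffices to produce in $V'$ every $M(1)$-lowest-weight vector, i.e. every $e^{\ell\alpha}$ times a monomial in the $a^{\pm}(-k-\tfrac12)$. For $\ell=0$: $E_{1,1}+E_{2,2}=a^{+}(-\tfrac12)a^{-}(-\tfrac12)\1$, and a short computation gives $(E_{1,2})_{(-m-1)}E_{2,1}=a^{+}(-\tfrac12)a^{-}(-m-\tfrac32)\1+(\text{terms already in }V')$; by induction on $m$ all strong generators $a^{+}(-\tfrac12)a^{-}(-m-\tfrac12)\1$ of $M_0$ lie in $V'$, and since $\1\otimes M$ is closed under $T$ and normally ordered products this yields $M_0\subseteq V'$. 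For $\ell=1$: from $E_{1,2}=\Psi^{+}(-\tfrac12)a^{-}(-\tfrac12)\1=e^{\alpha}\cdot a^{-}(-\tfrac12)\1$ one extracts $e^{\alpha}\cdot a^{-}(-n+\tfrac12)\1$ out of $(E_{1,2})_{(-n)}\1$ using modes of $E_{1,1}$, and multiplying by $M_0\subseteq V'$ together with $M_{-1}=\sum_k a^{-}(-k-\tfrac12)\,M_0$ gives $e^{\alpha}\cdot M_{-1}\subseteq V'$, and symmetrically $e^{-\alpha}\cdot M_1\subseteq V'$ via $E_{2,1}$. The general $\ell$ follows by induction on $|\ell|$ from iterated products such as $(E_{1,2})_{(-2)}E_{1,2}$, which is proportional to $e^{2\alpha}a^{-}(-\tfrac12)^{2}\1$, corrected by lower-degree elements already obtained. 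The one delicate point in (1) is the bookkeeping of these correction terms.

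For (2), it remains to prove $\Phi$ injective; this is the main obstacle. Since $V^{cri}(\mathfrak{gl}(1\vert1))\cong U\!\big(\mathfrak{gl}(1\vert1)\otimes t^{-1}\C[t^{-1}]\big)$ as a graded vector space, with two even and two odd oscillators in each degree $n\ge1$, its Hilbert--Poincaré series is $\prod_{n\ge1}(1+q^{n})^{2}/(1-q^{n})^{2}$, and $\Phi$ preserves the natural $\Z_{\ge0}$-grading; hence the surjection $\Phi$ is an isomorphism once $\mathrm{ch}\,V$ equals the same product. Writing $\mathrm{ch}\,F_\ell=q^{\ell^{2}/2}/(q)_{\infty}$ and summing over $\ell$ by the Jacobi triple product identity,
$$\mathrm{ch}\,V=\mathrm{Res}_t\,t^{-1}\prod_{m\ge0}\frac{(1+tq^{m+1/2})(1+t^{-1}q^{m+1/2})}{(1-tq^{m+1/2})(1-t^{-1}q^{m+1/2})},$$
which is exactly the character of the classical Clifford--Weyl realization of affine $\mathfrak{gl}(1\vert1)$ and so equals $\prod_{n\ge1}(1+q^{n})^{2}/(1-q^{n})^{2}$. (Alternatively one proves injectivity directly: in a monomial order on $F\otimes M$ in which the $a^{\pm}$-oscillators dominate the $\Psi^{\pm}$-oscillators, the vectors $(E_{1,1})_{(-n)}\1,(E_{2,2})_{(-n)}\1,(E_{1,2})_{(-n)}\1,(E_{2,1})_{(-n)}\1$ have leading terms of pairwise distinct oscillator type, so the images of the PBW monomials have pairwise distinct leading terms.) Finally (3) is immediate: $\mathfrak{Z}(V)\cong M_0$ by Lemma~\ref{lema-center}, and the Hilbert--Poincaré series of $M_0$ was already identified with $(\ref{HP1})$, equivalently with $(\ref{HP2})$; transporting this through the isomorphism of (2) realizes $\mathfrak{Z}(\widehat{\g})$ as $M_0$ and recovers the Molev--Mukhin formula.
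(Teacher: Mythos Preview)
Your proposal is essentially correct, and the construction of $\Phi$, the injectivity via PBW leading terms, and the deduction of (3) from Lemma~\ref{lema-center} are all in line with the paper. The genuine divergence is in the surjectivity argument for (1).

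You proceed by a direct inductive construction: use the $M(1)$--action to reduce to producing all vectors of the form $e^{\ell\alpha}\otimes v$ with $v\in M_{-\ell}$, then manufacture these from iterated products of the $E_{i,j}$ and bookkeep the correction terms. This works, but, as you yourself flag, the ``delicate point'' is exactly that bookkeeping, and carrying it out cleanly for general $\ell$ is tedious. The paper avoids this entirely by a spectral--flow trick: instead of building $V$ from inside, it enlarges $U=\mathrm{Im}(\Phi)$ to $\widehat{U}=\bigoplus_{s\in\Z} U\cdot e^{s\alpha}$, observes that the spectral--flow automorphisms of $\widehat{\mathfrak{gl}}(1|1)$ make $\widehat{U}$ a vertex subalgebra of $F\otimes M$, and then checks the elementary identities
\[
a^{+}(-\tfrac12)\mathbf{1}=E_{2,1}(0)\,\Psi^{+}(-\tfrac12)\mathbf{1},\qquad
a^{-}(-\tfrac12)\mathbf{1}=E_{1,2}(0)\,\Psi^{-}(-\tfrac12)\mathbf{1},
\]
which place all generators of $F\otimes M$ inside $\widehat{U}$. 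Hence $\widehat{U}=F\otimes M$ and, restricting to charge zero, $U=V$. This is shorter and sidesteps every induction. What your approach buys in return is self--containment: it does not invoke spectral flow or any structural input beyond $M(1)$--theory. One caution on your injectivity paragraph: the character identity you quote is not independently justified (appealing to the ``classical Clifford--Weyl realization'' is essentially the same statement at a different level), so the weight there is really carried by your alternative PBW leading--term argument, which is also what the paper uses.
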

\begin{proof}
The proof of assertion (1) is similar to \cite{K}.
Since  $n \ge 0$ we have
\bea && E_{1,1} (n) E_{1,2} = \delta_{n,0} E_{1,2}, \ \   E_{1,1} (n) E_{2,1} = -\delta_{n,0} E_{2,1}, \ \   E_{1,1} (n) E_{2,2} =- \delta_{n,1},    \nonumber \\
 &&   E_{2,2} (n) E_{1,2} = - \delta_{n,0} E_{1,2}, \ \   E_{2,2} (n) E_{2,1} = \delta_{n,0} E_{2,1}, \ \nonumber \\
 &&  E_{1,2} (n) E_{2,1} = \delta_{n,0} ( E_{1,1} + E_{2,2} ) \nonumber 
 \eea
 the commutator formula for vertex algebras directly implies that the components of the fields
 $$ E_{i,j} (z) = Y (E_{i,j}, z) = \sum_{n \in \Z} E_{i,j} (n) z ^ {-n-1} \qquad i,j \in \{1,2\} $$
 satisfy the commutation relations   (\ref{com}) for the affine ${\frak gl}(1 \vert 1)$ at the critical level. This gives a vertex algebra homomorphism 
 $\Phi : V^{cri}({\frak gl}(1|1)) \rightarrow V. $

Let $U = \mbox{Im} (\Phi)  =  V^{cri}({\frak gl}(1|1)). {\bf 1}$, i.e., $U$ is   the vertex subalgebra of $V$ generated by elements (\ref{def11})--(\ref{def22}). Consider 
$$ \widehat{U}= \bigoplus_{s \in \Z} V^{cri}({\frak gl}(1|1)) . e ^ {s \alpha}. $$
As in \cite{K} we see that each  $ V^{cri}({\frak gl}(1|1)) . e ^ {s \alpha}$ is obtained from $U$ by applying a   spectral flow automorphism for $\widehat{ {\frak gl}(1|1)}$, and therefore
 $$  (V^{cri}({\frak gl}(1|1)) . e ^ {s \alpha} )  \cdot  (V^{cri}({\frak gl}(1|1)) . e ^ {r \alpha} )   \subset    V^{cri}({\frak gl}(1|1)) . e ^ { (r+s) \alpha}  \qquad (r,s \in {\Z}). $$
This implies that  $\widehat{U}$ is a vertex subalgebra of $F \otimes M$. Since 
$$  a^{+ }  (-\tfrac{1}{2} ) {\bf 1}  = E_{2,1} (0) \Psi ^{+} (-\tfrac{1}{2}) {\bf 1},  \quad a^{- }  (-\tfrac{1}{2} ) {\bf 1}  = E_{1,2} (0) \Psi ^{-} (-\tfrac{1}{2}) {\bf 1}$$
we get 
$$ e^{\pm \alpha} , a^{\pm}  (-\tfrac{1}{2} ) {\bf 1}    \in  V^{cri}({\frak gl}(1|1))  . e ^ {\pm \alpha} \subset \widehat{U}. $$
So  all generators of $F \otimes M$ belong to the vertex subalgebra $\widehat{U}$.
Therefore $F \otimes M =   \widehat{U}$, 
which proves that $U =V$. This proves   (1).
 
 This gives a surjective vertex algebra homomorphism 
 $$\Phi : V^{cri}({\frak gl}(1|1)) \rightarrow V. $$
 The injectivity can be proved easily by using the PBW basis of $V^{cri}({\frak  gl}(1|1))$, which proves (2).
 The assertion (3)  follows from Lemma \ref{lema-center}.
 The proof follows.
 
\end{proof}

\begin{remark} 
It is interesting that the Hilbert-Poincare series of the center of  $V^{cri}({\frak gl}(1|1))$ coincides with $\mbox{ch}[\mathcal{W}_{1+\infty,-1}](q)$.  It is a natural question to see if there is a similar interpretation of the Hilbert-Poincare series for the center of $V^{cri}({\frak gl}(n|n))$  for general $n$. A different type of generalization will be proposed in Section \ref{generalization}. 
\end{remark}

\section{Modules for $V^{crit}({\frak gl}(1|1)) $ of the Whittaker type}
\label{Whittaker}

Affine vertex algebras  at critical levels also contain  modules of the Whittaker type. In the case of the affine Lie algebra $A_1^{(1)}$ such modules were constructed by the author, R. Lu and K. Zhao in \cite{ALZ}. In this section we show that our realization from Section    \ref{new-real} can be applied in a construction  of the  Whittaker type of modules  for $V^{cri}({\frak gl}(1|1)) $.

\begin{theorem}
For every  $\chi^+, \chi^- \in {\C}((z)) $,  $\chi ^{\pm} \ne 0$, $F(\chi^+, \chi^{-}) := F \otimes M(\chi^+, \chi^-)$ is an irreducible $V^{cri}(\frak{gl}(1|1)) $--module.
\end{theorem}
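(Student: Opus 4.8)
The plan is to realize $F(\chi^+,\chi^-)$ concretely as a module on the vector space $F$ (since $M(\chi^+,\chi^-)$ is one-dimensional) and show that the action of $V^{cri}(\mathfrak{gl}(1|1))$, transported through the isomorphism $\Phi$ of the Theorem in Section \ref{new-real}, has no proper nonzero submodule. Explicitly, on $F(\chi^+,\chi^-)$ the generating fields act by
\begin{align*}
E_{1,1}(z)&\mapsto \alpha(z), & E_{2,2}(z)&\mapsto \chi^+(z)\Psi^-(z)\cdots -\alpha(z) \ \text{(suitably normal ordered)},\\
E_{1,2}(z)&\mapsto \chi^-(z)\,\Psi^+(z), & E_{2,1}(z)&\mapsto \chi^+(z)\,\Psi^-(z),
\end{align*}
where $\chi^{\pm}(z)\in\mathbb{C}((z))$ are scalars, so the key point is that the modes $\Psi^+(-n-\tfrac12)$ and $\Psi^-(-n-\tfrac12)$ of the Clifford algebra now appear dressed by the invertible Laurent series $\chi^{\mp}(z)$, hence act on $F$ with a Whittaker-type (non-locally-nilpotent) behaviour. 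First I would fix a nonzero $v\in F(\chi^+,\chi^-)$ and, writing $v$ in the PBW-type basis (\ref{bas-fer}), show that by applying the positive modes coming from $E_{1,2}$ and $E_{2,1}$ (which, because $\chi^{\pm}\neq 0$, still generate enough of the Clifford algebra) one can lower $v$ down to a nonzero multiple of the vacuum $\mathbf 1\in F$.

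**Key steps.**
\begin{enumerate}
\item Translate the abstract module structure $F\otimes M(\chi^+,\chi^-)$ into explicit operators on $F$ via $\Phi$; record that $E_{1,2}(n)$ and $E_{2,1}(n)$ act as finite $\mathbb{C}$-linear combinations of $\Psi^+(r)$'s, respectively $\Psi^-(r)$'s, with the top coefficient (the one attached to $\Psi^{\pm}$ of minimal mode) invertible.
\item Given $0\neq v\in F$, use repeated application of suitable modes of $E_{1,2}$ and $E_{2,1}$ to annihilate the $\Psi^{\pm}$-creation operators appearing in $v$; because the relevant structure constants are units in $\mathbb{C}((z))$, this is a genuine Gaussian-elimination argument on the finitely many monomials of top degree in $v$, and it terminates with $\mathbf 1\in U(\widehat{\mathfrak{gl}(1|1)}).v$.
\item Show $U(\widehat{\mathfrak{gl}(1|1)}).\mathbf 1 = F$: apply the negative modes of $E_{1,2}$ and $E_{2,1}$ to $\mathbf 1$ to recover all vectors (\ref{bas-fer}), again using invertibility of $\chi^{\pm}$.
\item Conclude $F(\chi^+,\chi^-)$ is irreducible; the grading is only by $\alpha(0)=E_{1,1}(0)$, which is preserved, so there is nothing subtler to check.
\end{enumerate}

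**Main obstacle.**
The delicate point is Step 2: unlike the case of the Weyl/Clifford vertex algebra where positive modes act locally nilpotently and one simply runs them to terminate, here the modes $E_{1,2}(n),E_{2,1}(n)$ act as infinite sums $\sum_{r}\chi^{\pm}_{n-r}\Psi^{\pm}(r)$, so applying them does not obviously simplify a vector. I expect to handle this by filtering $F$ by the natural $\tfrac12\mathbb{Z}_{\ge 0}$-degree, observing that on the associated graded the leading term of $E_{1,2}(n)$ (resp. $E_{2,1}(n)$) acts as a single Clifford mode times a nonzero scalar (the lowest-order coefficient of $\chi^-$, resp. $\chi^+$), and then proving that from any nonzero $v$ one can strictly decrease the degree until reaching the vacuum. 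Packaging this cleanly — choosing the right order of operations so the elimination does not reintroduce higher-degree terms — is the real content; the rest is formal. One must also verify that the module is well defined, i.e. that $\Phi$ indeed makes $F\otimes M(\chi^+,\chi^-)$ into a $V^{cri}(\mathfrak{gl}(1|1))$-module, which is immediate from the fact that $M(\chi^+,\chi^-)$ is an $M$-module and $V\subset F\otimes M$.
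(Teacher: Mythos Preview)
Your approach is correct and reaches the same conclusion, but it is organized differently from the paper's proof. The paper exploits the Heisenberg vertex subalgebra $M(1)\subset V^{cri}(\mathfrak{gl}(1|1))$ generated by $E_{1,1}=\alpha$: since each $F_{\ell}\cong M(1).e^{\ell\alpha}$ is an irreducible $M(1)$--module, it suffices to show that every $e^{\ell\alpha}$ lies in $V^{cri}(\mathfrak{gl}(1|1)).{\bf 1}$ (for cyclicity) and that any nonzero submodule contains some $e^{\ell_0\alpha}$ (and then all of them). The first is done by the exact identity
\[
E_{1,2}(p^{-}-m+1)\cdots E_{1,2}(p^{-}){\bf 1}=(\chi^{-}_{p^{-}})^{m}\,\Psi^{+}(-m+\tfrac12)\cdots\Psi^{+}(-\tfrac12){\bf 1}=\nu\, e^{m\alpha},
\]
which holds on the nose because the higher $\Psi^{+}$--modes in $E_{1,2}(n)$ square to zero against the previously created $\Psi^{+}$'s; the second follows because $M(1)$ already acts irreducibly on each $F_{\ell}$.

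Your argument bypasses $M(1)$ entirely and runs a degree--filtration elimination using only the modes of $E_{1,2}$ and $E_{2,1}$. This works: on the associated graded (by conformal weight) the operator $E_{1,2}(n)$ for $n>p^{-}$ acts as the single Clifford mode $\chi^{-}_{p^{-}}\Psi^{+}(n-\tfrac12-p^{-})$, and similarly for $E_{2,1}$, so from any nonzero top--degree piece one can strictly lower the degree until reaching ${\bf 1}$. The advantage of the paper's route is brevity and structural clarity: the Heisenberg action does all the work inside each $F_{\ell}$, and one only has to move between the highest weight vectors $e^{\ell\alpha}$. The advantage of your route is that it is self-contained and does not invoke the $M(1)$--module theory of $F$; it also makes transparent why the hypothesis $\chi^{\pm}\ne 0$ is exactly what is needed (it supplies the invertible leading coefficient in your elimination). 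Your ``main obstacle'' is correctly identified and your proposed resolution via the filtration is sound; just be sure, when you write it out, to argue that for a nonzero top--degree piece there is always \emph{some} positive Clifford mode not annihilating it, so that the corresponding $E_{1,2}$ or $E_{2,1}$ mode genuinely lowers the degree.
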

\begin{proof}
Since $M(\chi^+, \chi^-)$  is a  $M$--module, we have that $F \otimes M(\chi^+, \chi^-)$ is a $F\otimes M$--module, and therefore $F(\chi^+, \chi^{-}) $  is a $(F\otimes M)_0 = V^{cri}({\frak gl}(1|1))  $--module. Since $M(\chi^+, \chi^-)$ is $1$-dimensional, it follows that as a vector space $F(\chi^+, \chi^{-}) $ is isomorphic to $F$. 

We first prove that  $F(\chi^+, \chi^{-})$ is  a cyclic  $V^{cri}({\frak gl}(1|1)) $--module and  $F(\chi^+, \chi^{-}) = V^{cri}({\frak gl}(1|1))  . {\bf 1} $. 
Since  $\chi ^{\pm} \ne 0$,  there are $p^{\pm} \in {\Z} $ such that
$$\chi^{\pm} (z) = \sum _{ j  \ge -p ^{\pm} } \chi ^{\pm}  _{-j} z^{j-1} \quad  \chi ^{\pm}  _{p^{\pm} }  \ne 0. $$
The action of $E_{1,2}(n)$ and $E_{2,1}(n)$ are given by the following formulas:
\bea
E_{1,2} (n) &=& \sum_{j \ge - p^-} \chi^- _{-j} \Psi^ + (n-1/2 +j), \label{for-1} \\
E_{2,1} (n) &=& \sum_{j \ge - p^+} \chi^+ _{-j} \Psi^ - (n-1/2 +j). \label{for-2}
\eea
Relations (\ref{for-1}) -(\ref{for-2})  imply   that for $m \in {\Z}_{> 0}$
\bea  E_{1,2} (p^- -m+ 1 ) \cdots E_{1,2} (p^- ) {\bf 1} &=&  (\chi^- _{p^-} )^m \Psi^+ (-m-\tfrac{1}{2}) \cdots  \Psi^+ (-\tfrac{1}{2}) {\bf 1} \nonumber \\
&=& \nu _1 e^{ m  \alpha} \quad (\nu_1 \ne 0), \nonumber \\
 E_{2,1} (p^+ -m+1 ) \cdots E_{2,1} (p^+ ) {\bf 1} &=&  (\chi^+ _{p^+} )^m \Psi^- (-m-\tfrac{1}{2}) \cdots  \Psi^- (-\tfrac{1}{2}) {\bf 1}    \nonumber    \\
 &=& \nu _2 e^{-  m  \alpha} \quad (\nu_2 \ne 0). \nonumber
\eea
This proves that 
$e^{\ell \alpha} \in   V^{cri}({\frak gl}(1|1))  . {\bf 1} $ for every $\ell \in {\Z}$. Since the Heisenberg vertex algebra $M(1)$ is a subalgebra of $V^{cri}({\frak gl}(1|1)) $, we get that $F_{\ell} \subset V^{cri}({\frak gl}(1|1))  . {\bf 1} $ for each $\ell$. Therefore $F(\chi^+, \chi^{-})$  is a  cyclic module.

Assume that $U $ is a non-trivial submodule in $F(\chi^+, \chi^{-})$. Using the action of the Heisenberg vertex algebra $M(1)$ we get  that there is $\ell_0 \in {\Z}$ such that
$ e^{\ell_0 \alpha} \in U$.
By using  the actions of elements  $E_{1,2}(n) $ and $E_{2,1}(n)$ and formulas (\ref{for-1}) -(\ref{for-2}),  we easily  get that $e^{\ell  \alpha} \in U $ for every $\ell  \in {\Z}$.  Finally,   by applying the action of the Heisenberg vertex algebra  $M(1)$  on vectors $e^{\ell  \alpha}$  we get that  $F_{\ell} \subset U$ for every $\ell  \in {\Z}$. The proof follows.
\end{proof}

\section{A generalization}
\label{generalization}

We shall briefly discuss a possible generalization of our construction. We omit some technical details, since  a more detailed analysis will be presented in our forthcoming papers.

Let $n \in {\N}$, and consider the vertex algebras
$ F^{(n)}$  and  $M^{(n)}$.
These vertex algebras admit a natural action of the Lie algebra ${\frak gl}_n$ (see previous sections).  So ${\frak gl}_n$  acts on $F^{(n)}  \otimes M^{(n)}$.  Define the vertex algebra $V_n$ as the fixed point subalgebra of  this action:
$$ V_n =  (F^{(n)}  \otimes M^{(n)} ) ^{{\frak gl}_n}. $$
Let ${\mathfrak Z}(V_n)$  denotes the center of the vertex algebra $V_n$.
\begin{proposition}
${\mathfrak Z}(V_n)$ is isomorphic to the vertex algebra $( M^{(n)}   )^{{\frak gl}_n}$.
\end{proposition}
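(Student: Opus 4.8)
The plan is to mimic the argument of Lemma \ref{lema-center}, replacing the Heisenberg subalgebra $M(1)$ generated by $\alpha$ by the rank-one Heisenberg algebra sitting inside the affine vertex algebra $V_1(\frak{gl}_n) \subset F^{(n)}$, together with the full weight decomposition of $F^{(n)}$ under the commuting ${\frak gl}_n$-action. First I would establish the easy inclusion: since $( M^{(n)})^{{\frak gl}_n} \subset M^{(n)}$ is a commutative vertex algebra and it lies in $V_n$ (being ${\frak gl}_n$-invariant and commuting with everything in $M^{(n)}$, while $F^{(n)}$ and $M^{(n)}$ commute inside the tensor product), every element of $( M^{(n)})^{{\frak gl}_n}$ is central in $V_n$. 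Hence $( M^{(n)})^{{\frak gl}_n} \subseteq {\mathfrak Z}(V_n)$.

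For the reverse inclusion I would argue that ${\mathfrak Z}(V_n) \subseteq M^{(n)}$ first, and then that it is ${\frak gl}_n$-invariant automatically (being a subspace of $V_n$). To get ${\mathfrak Z}(V_n) \subseteq M^{(n)}$, write a general element of $V_n \subseteq F^{(n)} \otimes M^{(n)}$ and use that $F^{(n)}$ carries the fields $\Psi_i^\pm(z)$; more precisely, consider the subalgebra $H$ of $F^{(n)}$ generated by the $n$ mutually commuting Heisenberg fields $\alpha_i := \Psi_i^+(-\tfrac12)\Psi_i^-(-\tfrac12){\bf 1}$. The point is that $F^{(n)}$, as a module for the Heisenberg algebra generated by $\alpha_1,\dots,\alpha_n$, decomposes into a direct sum of irreducible (Fock-type) modules indexed by the charge lattice $\Z^n$, exactly as in the $n=1$ case $F_\ell \cong M(1).e^{\ell\alpha}$. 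A vector $w \in F^{(n)} \otimes M^{(n)}$ that commutes with all of $H \otimes 1 \subseteq V_n$ must lie in the sum of the lowest-weight lines of these Fock modules tensored with $M^{(n)}$; combined with commuting with the remaining generators of $V_n$ one forces $w \in 1 \otimes M^{(n)}$, hence $w \in ({1 \otimes M^{(n)}}) \cap V_n = (M^{(n)})^{{\frak gl}_n}$. I would import the construction of \cite{L-JEMS} / \cite{CL} only to know that $V_n$ genuinely contains enough operators (namely the image of $V_1(\frak{gl}_n)$, in particular $H$), which is immediate since the $e_{i,j} = \Psi_i^+(-\tfrac12)\Psi_j^-(-\tfrac12){\bf 1}$ are ${\frak gl}_n$-invariant being built from the defining representation paired with its dual.

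The main obstacle, compared with the clean $n=1$ lemma, is the step "commuting with $H$ forces the $F^{(n)}$-component to sit in the bottom Fock line." For $n=1$ this is just the statement that $M(1)$-highest-weight vectors in $F_\ell$ are the $e^{\ell\alpha}$ (up to the commutative factor), which is elementary; for general $n$ one needs that a vector annihilated by all $\alpha_i(k)$, $k>0$, inside a graded $\frak{gl}_n$-module built from fermions, together with commuting with the off-diagonal currents $e_{i,j}$, has trivial $F^{(n)}$-part — equivalently that the only $\frak{gl}_n \oplus (\text{Heisenberg})$-trivial subspace interacting trivially is the vacuum line. Concretely I would handle this by: (i) using the $\Z^n$-charge grading on $F^{(n)}$ coming from the $\alpha_i(0)$ to reduce to a fixed charge, (ii) on charge $0$, noting $F^{(n)}_{\underline 0}$ as a Heisenberg module has the vacuum as its unique lowest vector up to the action of the ${\frak gl}_n$-Weyl-group-type operators, and (iii) checking that nonvacuum charges cannot survive because the currents $E_{i,j}$ of $V_n$ (the genuine $\frak{gl}(1|1)$-type fields, or their $\W_{1+\infty}$ analogues) move charge and would detect a nonzero bracket. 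I expect steps (i)–(iii) to be routine once the charge bookkeeping is set up, so the write-up will largely be an adaptation of Lemma \ref{lema-center} with the lattice $\Z\alpha$ replaced by $\Z^n$ and $M(1)$ by the rank-$n$ Heisenberg vertex algebra.
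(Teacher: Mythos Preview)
There is a genuine gap in your reverse-inclusion argument. You claim that the rank-$n$ Heisenberg algebra $H$ generated by the $\alpha_i = \Psi_i^+(-\tfrac12)\Psi_i^-(-\tfrac12){\bf 1}$ (and more generally the affine subalgebra $V_1({\frak gl}_n)$ generated by all $e_{i,j}$) lies inside $V_n = (F^{(n)}\otimes M^{(n)})^{{\frak gl}_n}$. This is false: the ${\frak gl}_n$-action on $F^{(n)}$ is by the zero modes $e_{k,\ell}(0)$, and under that action the vectors $e_{i,j}$ span a copy of the adjoint representation, not the trivial one. Pairing the defining representation with its dual gives $V\otimes V^\ast \cong {\frak gl}_n$, so only the trace $\sum_i \alpha_i$ is ${\frak gl}_n$-invariant. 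Thus the individual $\alpha_i$ do not belong to $V_n$, and you are not allowed to test centrality against them. With only the single Heisenberg field $\sum_i \alpha_i$ at your disposal, the Fock-module argument from the $n=1$ case no longer pins down the $F^{(n)}$-component to the vacuum line: the charge-$\underline 0$ subspace of $F^{(n)}$ alone is already much larger than a single Fock module for this rank-one Heisenberg.

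The paper circumvents this by using a larger subalgebra that \emph{is} ${\frak gl}_n$-invariant, namely $\mathcal W_{1+\infty,n}=(F^{(n)})^{{\frak gl}_n}\subset V_n$. The key external input is the $\mathcal W_{1+\infty,n}\times {\frak gl}_n$ duality on $F^{(n)}$ from \cite{FKRW}, \cite{KR}, which yields
\[
\{\,v\in F^{(n)} \ \vert\ u_m v = 0 \ \forall\, u\in \mathcal W_{1+\infty,n},\ m\ge 0\,\}=\C{\bf 1}.
\]
Any central element of $V_n$ must in particular commute with all of $\mathcal W_{1+\infty,n}$, hence its $F^{(n)}$-part is forced to be the vacuum, giving ${\mathfrak Z}(V_n)\subset M^{(n)}\cap V_n=(M^{(n)})^{{\frak gl}_n}$. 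Your strategy can be repaired along exactly these lines, but not with the Heisenberg algebra $H$ you proposed.
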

\begin{proof}
Clearly $( M^{(n)}   )^{{\frak gl}_n} \subset {\mathfrak Z}(V_n) $.
Next we  notice that $F^{(n)}$ is a completely reducible $W_{1+\infty, n} \times {\frak gl}_n$--module. By using the explicit decomposition from  \cite{FKRW} and \cite{KR}  we see that
$$\{ v \in F^{(n)} \ \vert \ u_m v = 0 \ \forall u \in W_{1+\infty, n}, \ m \ge 0   \} = {\C} {\bf 1}. $$
 Since $W_{1+\infty, n}$ is a vertex subalgebra of $V_n$,  we  get that  ${\mathfrak Z}(V_n) \subset M \cap V_n =  ( M^{(n)}  )^{{\frak gl}_n}$.  The claim follows.
\end{proof}

\begin{proposition}
The vertex algebra $V_n$ is strongly generated by the following vectors
\bea
&& j^{0,k} = -\sum_{i=1}^n \Psi^+ _i (-1/2) \Psi^- _i (-k -1/2) {\bf 1}, \\
&& j^{1,k} = \sum_{i=1}^n a^+ _i (-1/2) a^- _i (-k -1/2) {\bf 1}, \\
&& j^{+,k} = -\sum_{i=1}^n \Psi^+ _i (-1/2) a^- _i (-k -1/2) {\bf 1}, \\
&& j^{-,k} = \sum_{i=1}^n a^+ _i (-1/2) \Psi ^- _i (-k -1/2) {\bf 1}, 
\eea
where $ 0\le k \le n-1$.

\end{proposition}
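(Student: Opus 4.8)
The statement is a strong-generation result for $V_n = (F^{(n)} \otimes M^{(n)})^{\mathfrak{gl}_n}$, asserting that the four families $j^{0,k}, j^{1,k}, j^{+,k}, j^{-,k}$ with $0 \le k \le n-1$ generate $V_n$ strongly. The plan is to combine (i) the known strong-generation results for the $\mathfrak{gl}_n$-invariants of each tensor factor separately, (ii) a classical-invariant-theory description of $(F^{(n)} \otimes M^{(n)})^{\mathfrak{gl}_n}$ as generated by ``mixed'' bilinears, and (iii) a decoupling argument (in the style of Linshaw, \cite{L-JEMS}, already invoked in the Remark after \eqref{generators}) that reduces the seemingly larger generating set of all $\partial$-iterated bilinears to the finite truncation $0 \le k \le n-1$.

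First I would set up the combined oscillator picture: write $F^{(n)} \otimes M^{(n)}$ as a module for the Lie algebra spanned by all the bilinear currents one can form from the $2n$ ``positive'' generators $\Psi^+_i(-1/2)\mathbf 1, a^+_i(-1/2)\mathbf 1$ and the $2n$ ``negative'' ones $\Psi^-_i(-1/2)\mathbf 1, a^-_i(-1/2)\mathbf 1$, contracting the $\mathfrak{gl}_n$ index $i$. By the first fundamental theorem of invariant theory for $GL_n$ acting diagonally (the $F$-part is the standard rep on a fermionic space, the $M$-part on a bosonic space, each in $n$ copies), the invariant vertex subalgebra is generated — as a vertex algebra, i.e. allowing normally ordered products and derivatives — by the four species of $\mathfrak{gl}_n$-contracted bilinears $\sum_i X^+_i(-r-1/2) Y^-_i(-s-1/2)\mathbf 1$ with $X,Y \in \{\Psi, a\}$ and $r,s \ge 0$; this is the analogue of the $\mathcal W_{1+\infty}$ and symplectic-bosonic statements of \cite{FKRW}, \cite{KR}, and of the $M^{(n)}$-statement in the Remark after \eqref{generators}. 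So the intermediate claim to establish is: $V_n$ is strongly generated by $\{\, \sum_i X^+_i(-1/2)\,\partial^k Y^-_i(-1/2)\mathbf 1 \mid X,Y \in \{\Psi,a\},\ k \ge 0\,\}$, which up to sign conventions and $\partial^k$ versus mode shifts is exactly the four families $j^{\bullet,k}$ but with $k$ ranging over all of $\Zp$.

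The genuine content is then the truncation $k \le n-1$. Here I would run the Weyl-algebra/Clifford-algebra decoupling argument: the higher bilinears $j^{\bullet,k}$ for $k \ge n$ can be expressed as normally ordered polynomials in the $j^{\bullet,\ell}$ with $\ell < n$ and their derivatives. The mechanism is the classical one: over $GL_n$ there are no algebraically independent invariants beyond a bounded set, and the second fundamental theorem provides the relations (determinantal/Capelli-type identities) that let one solve for $j^{\bullet,n}$ in terms of lower ones; vertex-algebraically this becomes an OPE computation showing $\partial^n$-currents lie in the subalgebra generated by the $k < n$ ones. Concretely I expect to argue by induction on $k$: compute the relevant $\lambda$-bracket (or $n$-th products) among $j^{0,0}, j^{1,0}$ and $j^{\bullet,\ell}$, $\ell < k$, and extract $j^{\bullet,k}$ modulo the already-constructed subalgebra. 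This is precisely the technique of \cite{L-JEMS} cited earlier for $(M^{(n)})^{\mathfrak{gl}_n}$, extended to the mixed fermionic–bosonic setting and to all four current species rather than one.

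\textbf{The main obstacle.} The hard part will be the decoupling step — proving that no generators beyond $k = n-1$ are needed, i.e. that $j^{\bullet,n}$ is a normally ordered differential polynomial in the lower $j^{\bullet,\ell}$. For the pure $\mathcal W_{1+\infty,n}$ factor this is classical (the truncation at $k = n-1$ is the statement that $\mathcal W_{1+\infty,n}$ has $n$ generators of each conformal weight block, \cite{FKRW}), but in the mixed setting one must check that the fermionic and bosonic contributions combine correctly and that the super-signs in \eqref{com}-type relations do not spoil the Capelli identities. I would handle this by treating $F^{(n)} \otimes M^{(n)}$ as carrying an action of an orthosymplectic-type current algebra whose invariants are controlled, so that the super-FFT/SFT of \cite{CL} (the super $\mathcal W_{1+\infty}$ paper, which the authors cite as the motivating structure) gives the generating set directly; the remaining work is then bookkeeping to match their generators with the four families $j^{0,k}, j^{1,k}, j^{+,k}, j^{-,k}$ written above. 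A secondary technical point is ensuring strong (not merely weak) generation — that iterated normally ordered products of the $j^{\bullet,k}$ already exhaust $V_n$ without needing further derivatives — which again follows from the corresponding statement for $\mathcal W_{1+\infty,n}$ together with the reconstruction of the $a^\pm_i$-oscillators via the spectral-flow/zero-mode trick used in the proof of the Theorem in Section \ref{new-real}.
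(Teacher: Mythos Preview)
Your two-stage plan (first show strong generation by all $j^{\bullet,k}$, $k\ge 0$, then truncate to $k\le n-1$) matches the paper's structure, and both you and the paper ultimately lean on \cite{CL}. The differences are in how each stage is executed. For stage one, the paper does not invoke the first fundamental theorem directly at the vertex-algebra level; instead it passes to the associated graded via the length filtration on $F^{(n)}\otimes M^{(n)}$, observes that $\mbox{gr}(F^{(n)}\otimes M^{(n)})\cong \mbox{gr}(F^{(n)}\otimes W^{(n)})$ as $\mathfrak{gl}_n$-modules (since $M^{(n)}$ and the Weyl vertex algebra $W^{(n)}$ have the same underlying graded space), and then imports the generators from \cite[Section~5]{CL}. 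This is cleaner than arguing FFT for the mixed system from scratch. For the truncation, the paper's mechanism is not the inductive OPE/Capelli computation you sketch: instead it uses the single known decoupling relation $j^{0,m}=P_m(j^{0,0},\ldots,j^{0,n-1})$ coming from $\mathcal W_{1+\infty,n}\subset V_n$, and then applies the $\mathfrak{gl}(1\vert 1)$ action on $V_n$ (the four species $j^{0,\bullet},j^{1,\bullet},j^{+,\bullet},j^{-,\bullet}$ sit in a $\mathfrak{gl}(1\vert 1)$-multiplet, as in \cite{CL}) to transport that one relation to decoupling relations for $j^{1,m}$ and $j^{\pm,m}$. This bypasses the case-by-case work you anticipate and avoids any super-sign worries. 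Your route via SFT/determinantal identities could be made to work, but it is the harder path; and your closing remark about recovering strong generation via the spectral-flow trick of Section~\ref{new-real} is off target---strong generation here comes for free from the filtration/associated graded argument, not from any oscillator reconstruction.
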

\begin{proof}
The proof is essentially the same as the proof of \cite[Theorem 7.1]{CL}. Here is the sketch of the proof with  the explanation of some    basic steps.
\begin{itemize}
\item  The vertex algebra $F^{(n)} \otimes M^{(n)}$ admits the filtration
$$ \mathcal {SM}_0 \subset  \mathcal {SM} _1 \subset \cdots , \quad F^{(n)} \otimes M^{(n)} = \cup _{k \ge 0} \mathcal {SM}_k ,  $$
where $\mathcal {SM} _k$ is spanned by the products of generators $\Psi_i ^{\pm} (-n-1/2)$, $a_ i ^{\pm} (-n-1/2)$ of length at most $k$.
Since  the action of ${\frak gl}_n$ preserves the above filtration on $F^{(n)} \otimes M^{(n)}$, we have the isomorphism of the associated graded (super)algebras:
$$ \mbox{gr} (V_n ) = \left ( \mbox{gr} ( F^{(n)} \otimes M^{(n)} ) \right) ^{\frak{gl}_n}  =   \left ( \mbox{gr} ( F^{(n)} \otimes W^{(n)} ) \right) ^{\frak{gl}_n}  . $$
The generators of the above $\Z_{\ge 0}$--graded (super) algebras are determined in \cite[Section 5]{CL}.
As a consequence we conclude that 
 $V_n$ is strongly generated by elements
$j^{0,k}, j^{1,k}, j ^{\pm, k}$, $k \ge 0$.

\item  By using the fact that $V_n$ contains the vertex subalgebra isomorphic to $W_{1+\infty,n}$, which is strongly generated by $j^{0,k}$, $1\le k \le n$, we get relation
$$ j^{0,m} = P_m(j ^{0,0}, \cdots j^{0,n-1} ) \quad  m\ge n$$
(cf. relation (7.1) of \cite{CL})).
 Then by  applying  the ${\frak gl}(1 \vert 1)$ action on the decoupling relation above, we get that $V_n$ is strongly generated by the generators described in the statement.
\end{itemize}

\end{proof}

      \bigskip


\begin{thebibliography}{1}
 
 \bibitem{ALZ}  D.~Adamovi\'c, R.~Lu, K. Zhao,  \emph{ Whittaker modules for the affine Lie algebra $A_1 ^{(1)}$},  Advances in Mathematics \textbf{289} (2016) 438--479
 
 \bibitem{AM-2007} D. Adamovi\' c, A. Milas, Logarithmic intertwining operators and $\mathcal W(2, 2p ? 1)$-algebras,
J. Math. Phys., 48 (2007), p. 073503.
 
 \bibitem{AP-2014} D.~Adamovi\'c, O.~Per\v{s}e, \emph{Fusion Rules and Complete Reducibility of
Certain Modules for Affine Lie Algebras},  Journal of algebra and its applications \textbf{13}, n.1,  1350062 (2014), 18pp.

\bibitem{BM} K. Bringmann, A. Milas,  $\mathcal W$-algebras, false theta functions and quantum modular forms, I,  Int Math Res Notices (2015) 2015 (21): 11351--11387.

\bibitem{CL}T. Creutzig, A. Linshaw, \emph{ The super $W_{1+\infty}$ algebra with integral central charge}, Trans. Am. Math. Soc. 367, No. 8 (2015), 5521--5551
 

\bibitem{Fr} E. Frenkel, \emph{Lectures on Wakimoto modules, opers and the center at the critical level}, Advances in  Mathematics  195 (2005) 297--404.
\bibitem{FB} E. Frenkel, D. Ben-Zvi, Vertex algebras and algebraic curves. Second edition. Mathematical Surveys and Monographs, 88. American
Mathematical Society, Providence, RI, 2004. xiv+400 pp.

\bibitem{FF-center} B. Feigin, E. Frenkel,  \emph{ Affine Kac--Moody algebras at the critical level and Gelfand--
Dikii algebras}, Int. J. Mod. Phys. A7, Suppl. 1A (1992), 197--215.

\bibitem{FF}  A.J.~Feingold,  I.B.~Frenkel \emph{Classical affine algebras}, Advances of  Mathematics \textbf{56} (1985), 117--172.

\bibitem{FKRW}  E. Frenkel, V. Kac, A. Radul, W. Wang,  \emph{$W_{1 + \infty}$--algebra and $\frak gl_N$ with central charge $N$}, Comm. Math. Phys. 170 (2) (1995) 337-357. 
 
\bibitem {K}  V. Kac,   Vertex Algebras for Beginners, University
Lecture Series, Second Edition,   Amer. Math. Soc., 1998, Vol. 10.


\bibitem{KR}
V. Kac, A. Radul, \emph{Representation Theory of the Vertex Algebra $W_{1+\infty}$} Transform. Groups. { \bf 1}  1996, 41--70.


\bibitem{LL}J. Lepowsky and H. Li, \emph{Introduction to Vertex Operator Algebras and Their Representations}, Birkh\"{a}user, Boston, 2003.

\bibitem{L-JEMS} A. Linshaw, \emph{  Invariant theory and the $W_{1+\infty}$ algebra with negative integral central charge}, J. Eur. Math. Soc. 13, No. 6 (2011), 1737--1768.


 \bibitem{MM} A. I. Molev, E. Mukhin,  \emph{ Invariants of the vacuum module associated with the Lie superalgebra ${\mathfrak gl}(1|1)$},   J. Phys. A: Math. Theor. 48 (2015) 314001, arXiv:1502.03511
 
 \bibitem{MR} A. I. Molev, E. Ragoucy,  \emph{  The MacMahon Master Theorem for right quantum superalgebras and higher Sugawara operators for $\widehat {\mathfrak gl}_{m | n}$} , Moscow Math. J. 14 (2014), 83--119.

\bibitem{Wa}
W. Wang, $W_{1+\infty}$ Algebra, $W_3$ Algebra, and Friedan-Martinec-Shenker bosonization, {\em Comm. Math. Phys.} 1998, 195, 95-111.



\end{thebibliography}
\end{document}